\newtheorem{thm}{Theorem}[section]
\newtheorem{prop}[thm]{Proposition}
\newtheorem{coro}[thm]{Corollary}
\newtheorem{asump}[thm]{Assumption}
\newtheorem{rmk}{Remark}
\newcommand{\R}{\mathbb{R}} 
\newcommand{\N}{\mathbb{N}}
\newcommand{\E}{\mathbb{E}}
\newcommand{\cX}{\mathcal{X}} 
\newcommand{\cY}{\mathcal{Y}}
\def\P{{\mathbb P}}
\begin{document}
\title{Large deviations for conditional guesswork}
\author{ 
Jiange Li \thanks{Research Laboratory of Electronics, Massachusetts Institute of Technology, Cambridge, MA 02139, USA. E-mail: jiange.li@mail.huji.ac.il}
}
\date{\today}
\maketitle

\begin{abstract}
The guesswork problem was originally studied by Massey to quantify the number of guesses needed to ascertain a discrete random variable. It has been shown that for a large class of random processes the rescaled logarithm of the guesswork satisfies the large deviation principle and this has been extended to the case where $k$ out $m$ sequences are guessed. The study of conditional guesswork, where guessing of a sequence is aided by the observation of another one, was initiated by Ar{\i}kan in his simple derivation of the upper bound of the cutoff rate for sequential decoding. In this note, we extend these large deviation results to the setting of conditional guesswork.
\end{abstract}

\section{Introduction}

Let $(X, Y)$ be a pair of random variables with $X$ and $Y$ taking values in a finite alphabet set $\cX$ and a countable alphabet set $\cY$, respectively. Here, $X$ is the random variable to be guessed by a series of truthfully answered questions of the form ``Is $X=x$?", while $Y$ is a correlated random variable that is directly observed. For example, in sequential decoding, one can think of $X$ as channel input and $Y$ as channel output. We call $G(X)$ a guessing function of $X$ if $G: \cX\mapsto\{1, 2, \cdots, |\cX|\}$ is a one-to-one function. A guessing function determines the order in which guesses are made; that is, $G(x)$ is number of queries needed when $X=x$. This guesswork problem, originally proposed by Massey \cite{Mas94}, arises for instance when a cryptanalyst must try out possible secret keys one at a time after narrowing the possibilities by some cryptanalysis. We call $G(X|Y)$ a guessing function of $X$ given $Y$, if $G(X|y)$ is a guessing function of $X$ for any given value $Y=y$. The study of conditional guesswork was initiated by Ar{\i}kan \cite{Ari96} in his simple derivation of the upper bound of the cutoff rate for sequential decoding. This was motivated by Jacobs-Berlekamp's observation  \cite{JB67} on the relationship between sequential decoding and guessing.

It is not hard to see that the average number of guesses $\E G(X)$ is minimized when one makes guesses of values of $X$ from the most likely to the least likely. Such a guessing function is called optimal. Massey \cite{Mas94} lower bounded $\E G(X)$ in terms of Shannon entropy $H(X)$. It is observed by Ar{\i}kan  \cite{Ari96} that $1/(1+\alpha)$-R\'enyi entropy $H_{1/(1+\alpha)}(X)$ is the appropriate metric to measure the logarithm of the $\alpha$-th moment $\E G(X)^\alpha$ for $\alpha>0$. (Actually, this result was discovered by Campbell \cite{Cam65} thirty years ago). Ar{\i}kan's bound is asymptotically sharp when one considers a long sequence of independent and identically distributed (i.i.d.) random variables. This result was subsequently extended by Malone-Sullivan \cite{MS04} to Markov processes with finite state spaces, and by Pfister-Sullivan \cite{PS04} to more general processes for $\alpha>-1$. This asymptotic behavior inspires the recent study of large deviations for guesswork by Christiansen-Duffy \cite{CD13} and for the guesswork of guessing $k$ out $m$ mutually independent sequences by Christiansen-Duffy-du Pin Calmon-M\'edard \cite{CDCM15}. Recently, Duffy-Li-M\'edard \cite{DLM18} employed the large deviation results of guesswork to give  a simple derivation of Shannon's channel coding theorem \cite{Sha48} for additive noise channels. Motivated by potential applications  in coding-decoding of concatenated codes, we extend these large deviation results to the setting of conditional guesswork.

\section{Negative moments of guesses}

We derive bounds on negative moments of guessing functions, which complement Ar{\i}kan's \cite{Ari96} results on positive moments. Our lower bounds for negative moments resemble the upper bounds of Ar{\i}kan for positive moments, while our upper bound takes the form of Ar{\i}kan's lower bound for positive moments. Our proof for the lower bound mirrors Ar{\i}kan's approach, while our upper bounds proof uses a somewhat different technique, based on the reverse H\"{o}lder's inequality, inspired by a remark by Ar{\i}kan \cite{Ari96}.

\begin{thm}\label{thm:upper}
Let $G(X)$ and $G(X|Y)$ be arbitrary guessing functions. For $\alpha\in(-1, 0)$, we have
$$\label{eq:upper-guess-u}
\E G(X)^\alpha\leq(1+\log|\cX|)^{-\alpha}\Big(\sum_{x}p_{X}(x)^{\frac{1}{1+\alpha}}\Big)^{1+\alpha},
$$
$$\label{eq:upper-guess-c}
\E G(X|Y)^\alpha\leq(1+\log|\cX|)^{-\alpha}\sum_{y}\Big(\sum_{x}p_{X, Y}(x, y)^{\frac{1}{1+\alpha}}\Big)^{1+\alpha},
$$
where $|\cX|$ is the cardinality of $\cX$, and $p_X(x)$ and $p_{X, Y}(x, y)$ are the probability mass functions of $X$ and $(X, Y)$, respectively.
\end{thm}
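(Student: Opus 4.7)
Two ingredients drive the argument. First, the deterministic identity that any guessing function $G:\cX\to\{1,\ldots,|\cX|\}$ is a bijection, so
$$ \sum_{x\in\cX} G(x)^{-1} \;=\; \sum_{k=1}^{|\cX|}\frac{1}{k} \;\leq\; 1+\log|\cX|, $$
which is the source of the prefactor $(1+\log|\cX|)^{-\alpha}$. Second, as signalled in the preamble to the theorem, the reverse H\"older inequality: for conjugate exponents $p\in(0,1)$ and $q<0$ with $1/p+1/q=1$ and positive weights,
$$ \sum_x u(x)v(x) \;\geq\; \Bigl(\sum_x u(x)^p\Bigr)^{1/p}\Bigl(\sum_x v(x)^q\Bigr)^{1/q}. $$

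\textbf{Unconditional case.} I apply reverse H\"older with the choice $p=1+\alpha\in(0,1)$ and $q=(1+\alpha)/\alpha<0$, restricted to $\supp(p_X)$, to the decomposition
$$ p_X(x)^{1/(1+\alpha)} \;=\; u(x)v(x), \quad u(x) := \bigl(p_X(x)G(x)^\alpha\bigr)^{1/(1+\alpha)}, \quad v(x) := G(x)^{-\alpha/(1+\alpha)}. $$
The exponents are tailored so that $u(x)^p = p_X(x)G(x)^\alpha$ and $v(x)^q = G(x)^{-1}$. Reverse H\"older therefore yields
$$ \sum_x p_X(x)^{1/(1+\alpha)} \;\geq\; \bigl(\E G(X)^\alpha\bigr)^{1/(1+\alpha)}\Bigl(\sum_{x\in\supp(p_X)} G(x)^{-1}\Bigr)^{\alpha/(1+\alpha)}. $$
Because $\alpha/(1+\alpha)<0$, the harmonic bound lower-bounds the second factor by $(1+\log|\cX|)^{\alpha/(1+\alpha)}$; raising both sides to the positive power $1+\alpha$ and rearranging delivers the first inequality of the theorem.

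\textbf{Conditional case and main obstacle.} The conditional bound follows by applying the unconditional inequality to each conditional law $p_{X|Y}(\cdot\mid y)$ with the slice guessing function $G(\cdot\mid y)$, then multiplying by $p_Y(y)$ and summing over $y$. The substitution $p_{X|Y}(x\mid y)=p_{X,Y}(x,y)/p_Y(y)$ pulls a factor $p_Y(y)^{-1/(1+\alpha)}$ out of the inner sum, which on raising to the $(1+\alpha)$-th power becomes $p_Y(y)^{-1}$ and cancels exactly the $p_Y(y)$ in front, reproducing the stated $p_{X,Y}$-form. The only subtle point in the whole argument is the sign bookkeeping: reverse H\"older first flips the inequality direction (it gives a lower bound on $\sum_x p_X(x)^{1/(1+\alpha)}$), and raising the harmonic sum to the negative exponent $\alpha/(1+\alpha)$ flips it again; one must verify that these two reversals, together with the final application of $x\mapsto x^{1+\alpha}$ (order-preserving since $1+\alpha>0$), compose to yield an upper bound on $\E G(X)^\alpha$ rather than a lower one.
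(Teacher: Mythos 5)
Your proof is correct and follows essentially the same route as the paper: the paper applies the identical reverse H\"older inequality with $p=1+\alpha$, the same factorization (written with sums indexed by the guess number $i=G(x)$ rather than by $x$, which is equivalent since $G$ is a bijection), the same harmonic-sum bound $\sum_{i\le|\cX|} i^{-1}\le 1+\log|\cX|$, and the same averaging-over-$y$ reduction with the $p_Y(y)$ cancellation for the conditional case. Your sign bookkeeping at the end is also exactly the step the paper relies on, so there is nothing to add.
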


\begin{proof}
Recall the following reverse H\"{o}lder inequality: for any $p, q>0$ such that $1/p-1/q=1$, and any $a_i, b_i>0$, we have
$$
\sum_{i}a_ib_i\geq\Big(\sum_{i}a_i^p\Big)^{1/p}\Big(\sum_{i}b_i^{-q}\Big)^{-1/q}.
$$
The upper bound in unconditional case follows by taking $p=1+\alpha$ and $q=-(1+1/\alpha)$, and 
$$
a_i=(i^\alpha\P(G(X)=i))^{\frac{1}{1+\alpha}},~~b_i=i^{-\frac{\alpha}{1+\alpha}},~i=1,\cdots, |\cX|,
$$
as well noting that
$$
\sum_{i=1}^{|\cX|}i^{-1}\leq1+\log|\cX|.
$$
Then the upper bound in conditional case follows readily from the definition.
\begin{align*}
\E G(X|Y)^\alpha&= \sum_{y}p_Y(y)\E G(X|Y=y)^{\alpha}\\
&\leq (1+\log|\cX|)^{-\alpha}\sum_{y}p_Y(y)\Big(\sum_{x}p_{X|Y}(x|y)^{\frac{1}{1+\alpha}}\Big)^{1+\alpha}\\
&= (1+\log|\cX|)^{-\alpha}\sum_{y}\Big(\sum_{x}p_{X, Y}(x, y)^{\frac{1}{1+\alpha}}\Big)^{1+\alpha}.
\end{align*}
\end{proof}

\begin{thm}\label{thm:lower}
Let $G(X|Y)$ be an optimal conditional guessing function. For $\alpha\in(-1, 0)$, we have
\begin{align*}
\E G(X|Y)^\alpha\geq\sum_{y}\Big(\sum_{x}p_{X, Y}(x, y)^{\frac{1}{1+\alpha}}\Big)^{1+\alpha}.
\end{align*}
\end{thm}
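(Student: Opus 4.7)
My plan is to mirror Arıkan's argument for the positive-moment upper bound on $\E G(X)^\alpha$, exploiting the fact that several inequalities reverse when $\alpha$ becomes negative. First I would reduce to the unconditional case. Since $p_{X,Y}(x,y)=p_Y(y)\,p_{X|Y}(x|y)$, the right-hand side factors as
\[
\sum_y p_Y(y)\Bigl(\sum_x p_{X|Y}(x|y)^{\frac{1}{1+\alpha}}\Bigr)^{1+\alpha},
\]
while the left-hand side equals $\sum_y p_Y(y)\,\E G(X|Y=y)^\alpha$. So it suffices to prove, for any probability mass function $p$ on $\cX$ listed in decreasing order $p(1)\geq p(2)\geq\cdots$ and any $\alpha\in(-1,0)$, that
\[
\sum_i i^\alpha p(i)\geq\Bigl(\sum_i p(i)^{\frac{1}{1+\alpha}}\Bigr)^{1+\alpha}.
\]

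The central step will be a rank bound in the spirit of Arıkan's key observation. For any $s>0$, monotonicity gives $p(j)\geq p(i)$ for $j\leq i$, hence $i\,p(i)^s\leq\sum_{j=1}^i p(j)^s\leq\sum_j p(j)^s$, which rearranges to $i\leq p(i)^{-s}\sum_j p(j)^s$. Because $\alpha<0$, raising to the $\alpha$-th power reverses the inequality:
\[
i^\alpha\geq p(i)^{-s\alpha}\Bigl(\sum_j p(j)^s\Bigr)^{\alpha}.
\]
Multiplying by $p(i)\geq 0$ and summing in $i$ yields
\[
\sum_i i^\alpha p(i)\geq\Bigl(\sum_j p(j)^s\Bigr)^{\alpha}\sum_i p(i)^{1-s\alpha}.
\]

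To conclude, I would pick $s$ so the two sums coincide. Choosing $s=1/(1+\alpha)$, which lies in $(1,\infty)$ for $\alpha\in(-1,0)$, makes $1-s\alpha=s=1/(1+\alpha)$, so the bound collapses to
\[
\sum_i i^\alpha p(i)\geq\Bigl(\sum_i p(i)^{\frac{1}{1+\alpha}}\Bigr)^{\alpha}\sum_i p(i)^{\frac{1}{1+\alpha}}=\Bigl(\sum_i p(i)^{\frac{1}{1+\alpha}}\Bigr)^{1+\alpha},
\]
and reintroducing the outer sum over $y$ against $p_Y(y)$ recovers the theorem. I do not anticipate a real obstacle here; the only spot that needs attention is the sign bookkeeping when raising the rank bound $i\leq p(i)^{-s}\sum_j p(j)^s$ to the negative power $\alpha$. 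This is precisely the mechanism that avoids the logarithmic factor $(1+\log|\cX|)^{-\alpha}$ incurred by the reverse-Hölder argument used for the upper bound in Theorem~\ref{thm:upper}.
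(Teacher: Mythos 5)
Your proof is correct and is essentially the paper's own argument (which in turn mirrors Arıkan's Proposition 4): the rank bound $i\leq p(i)^{-1/(1+\alpha)}\sum_j p(j)^{1/(1+\alpha)}$ is exactly the paper's inequality $G(x|y)\leq\sum_{x'}(p_{X|Y}(x'|y)/p_{X|Y}(x|y))^{1/(1+\alpha)}$, and both proofs then raise it to the negative power $\alpha$ to reverse the inequality and average against $p_{X,Y}$. The sign bookkeeping and the final resummation over $y$ are handled correctly.
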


\begin{proof}
The statement can be proved in the same manner as that of Proposition 4 in \cite{Ari96}. We include the proof for completeness. For any optimal guessing function $G(X|Y)$, we have
\begin{align*}
G(x|y) &= \sum_{x': G(x'|y)\leq G(x|y)}1\\
&\leq \sum_{x': G(x'|y)\leq G(x|y)}(p_{X|Y}(x'|y)/p_{X|Y}(x|y))^{\frac{1}{1+\alpha}}\\
&\leq \sum_{x'}(p_{X|Y}(x'|y)/p_{X|Y}(x|y))^{\frac{1}{1+\alpha}}.
\end{align*}
Then the proof readily follows from
\begin{align*}
\E G(X|Y)^\alpha &=\sum_{x, y}p_{X, Y}(x,y)G(x|y)^\alpha\\
&\geq \sum_{x, y}p_{X, Y}(x,y)\Big(\sum_{x'}(p_{X|Y}(x'|y)/p_{X|Y}(x|y))^{\frac{1}{1+\alpha}}\Big)^{\alpha}\\
&= \sum_{y}p_Y(y)\Big(\sum_{x}p_{X|Y}(x|y)^{\frac{1}{1+\alpha}}\Big)^{1+\alpha}\\
&= \sum_{y}\Big(\sum_{x}p_{X, Y}(x, y)^{\frac{1}{1+\alpha}}\Big)^{1+\alpha}.
\end{align*}
\end{proof}

Recall that the R\'enyi entropy of order $\alpha>0$ (or simply $\alpha$-R\'enyi entropy) of $X$ is defined as
$$
H_\alpha(X)=\frac{\alpha}{1-\alpha}\log\Big(\sum_{x}p_X(x)^\alpha\Big)^{1/\alpha}.
$$
Unlike conditional Shannon entropy, there is no commonly accepted notation of conditional R\'enyi entropy. We refer to \cite{FB14} for discussions of different definitions. We follow Arimoto's notation \cite{Ari77} and define the conditional $\alpha$-R\'enyi entropy of $X$ given $Y$ as
\begin{align}\label{eq:cond-renyi}
H_\alpha(X|Y)=\frac{\alpha}{1-\alpha}\log\sum_{y}\Big(\sum_{x}p_{X, Y}(x, y)^\alpha\Big)^{1/\alpha}.
\end{align}
Rewrite our bounds in Theorem \ref{thm:upper} and Theorem \ref{thm:lower} in terms of R\'enyi entropies. We will see the following operational characterization of R\'enyi entropies. This connection was  identified in \cite{Cam65} and \cite{Ari96} for $\alpha>0$. 

\begin{coro}\label{coro:scgf-renyi}
Let $X_{1, n}=(X_1, \cdots, X_n)$ and $Y_{1, n}=(Y_1, \cdots, Y_n)$ be two random sequences. Let $G(X_{1, n})$ and $G(X_{1, n}|Y_{1, n})$ be optimal guessing functions. Suppose the pairs $(X_i, Y_i)$ are jointly independent and have identical distribution.  Let $-1<\alpha<0$. Then we have
$$
\lim_{n\to\infty}\frac{1}{n}\log\E G(X_{1, n}|Y_{1, n})^\alpha=\alpha H_{1/{(1+\alpha)}}(X_1|Y_1).
$$
In particular, we have
$$
\lim_{n\to\infty}\frac{1}{n}\log\E G(X_{1, n})^\alpha=\alpha H_{1/{(1+\alpha)}}(X_1).
$$
\end{coro}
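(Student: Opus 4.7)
The plan is to apply Theorems~\ref{thm:upper} and~\ref{thm:lower} directly to the $n$-fold pair $(X_{1,n}, Y_{1,n})$, exploit the i.i.d.\ product structure to factor the resulting sums, and then normalize by $1/n$.

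First, since the pairs $(X_i, Y_i)$ are i.i.d., the joint law factors as $p_{X_{1,n}, Y_{1,n}}(x_{1,n}, y_{1,n}) = \prod_{i=1}^{n} p_{X, Y}(x_i, y_i)$, and hence, by Fubini, the expression appearing on the right-hand side of both theorems tensorizes:
$$
\sum_{y_{1,n}}\Big(\sum_{x_{1,n}} p_{X_{1,n}, Y_{1,n}}(x_{1,n}, y_{1,n})^{1/(1+\alpha)}\Big)^{1+\alpha} \;=\; C^n,
$$
where $C := \sum_{y}\bigl(\sum_{x} p_{X, Y}(x, y)^{1/(1+\alpha)}\bigr)^{1+\alpha}$. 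Applying Theorem~\ref{thm:lower} (which needs $G(X_{1,n}|Y_{1,n})$ to be optimal, as assumed) together with Theorem~\ref{thm:upper} (with alphabet $\cX^n$ of cardinality $|\cX|^n$) yields the sandwich
$$
C^n \;\leq\; \E\, G(X_{1,n}|Y_{1,n})^\alpha \;\leq\; (1+n\log|\cX|)^{-\alpha}\, C^n,
$$
and the prefactor grows only polynomially in $n$ since $-\alpha>0$.

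Taking logarithms, dividing by $n$, and sending $n\to\infty$, the prefactor contributes at most $(-\alpha)\,n^{-1}\log(1+n\log|\cX|)\to 0$, so both sides converge to $\log C$. It remains to identify $\log C$ with $\alpha\, H_{1/(1+\alpha)}(X_1|Y_1)$. Setting $\beta := 1/(1+\alpha)>1$, the definition \eqref{eq:cond-renyi} reads $H_\beta(X_1|Y_1) = \tfrac{\beta}{1-\beta}\log\sum_{y}\bigl(\sum_{x} p_{X,Y}(x,y)^{\beta}\bigr)^{1/\beta}$, and a direct computation gives $\beta/(1-\beta) = 1/\alpha$ and $1/\beta = 1+\alpha$, so $\log C = \alpha\, H_{1/(1+\alpha)}(X_1|Y_1)$, as desired. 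The unconditional statement follows by taking $Y$ to be a deterministic constant (equivalently, $|\cY|=1$).

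No step constitutes a genuine obstacle; the corollary is essentially a direct consequence of the single-letter bounds combined with tensorization. The only points requiring care are matching the Arimoto-style parameterization \eqref{eq:cond-renyi} of the conditional R\'enyi entropy with the exponents arising in the guesswork bounds, and verifying that the logarithmic prefactor $(1+n\log|\cX|)^{-\alpha}$ in the upper bound is asymptotically negligible against $C^n$ on the exponential scale.
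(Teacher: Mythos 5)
Your proof is correct and follows essentially the same route as the paper: tensorize the quantity $\sum_{y}\bigl(\sum_{x}p_{X,Y}(x,y)^{1/(1+\alpha)}\bigr)^{1+\alpha}$ using independence, sandwich $\E\, G(X_{1,n}|Y_{1,n})^{\alpha}$ between $C^{n}$ and $(1+n\log|\cX|)^{-\alpha}C^{n}$ via Theorems~\ref{thm:lower} and~\ref{thm:upper}, and note that the prefactor vanishes on the exponential scale. The paper's proof is just a terser version of the same argument; your added details (the exponent bookkeeping identifying $\log C$ with $\alpha H_{1/(1+\alpha)}(X_1|Y_1)$ and the negligibility of the logarithmic prefactor) are all accurate.
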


\begin{proof}
Since $(X_i, Y_i)$ are i.i.d., we have
\begin{align*}
\sum_{y_{1, n}}\Big(\sum_{x_{1, n}}p_{X_{1, n}, Y_{1, n}}(x_{1, n}, y_{1, n})^{\frac{1}{1+\alpha}}\Big)^{1+\alpha}=\Big(\sum_{y_1}\Big(\sum_{x_1}p_{X_1, Y_1}(x_1, y_1)^{\frac{1}{1+\alpha}}\Big)^{1+\alpha}\Big)^n.
\end{align*}
Then the statements readily follow from Theorem \ref{thm:upper} and Theorem \ref{thm:lower}.
\end{proof}


\section{Large deviations for optimal guessing functions}

Let $\mathbb{X}=(X_1, \cdots, X_n, \cdots)$ and $\mathbb{Y}=(Y_1, \cdots, Y_n, \cdots)$ be a pair of random sequences. We denote by $X_{1, n}=(X_1, \cdots, X_n)$ and $Y_{1, n}=(Y_1, \cdots, Y_n)$ the truncated sequences of length $n$. The scaled cumulant generating function of the sequence $\{n^{-1}\log G(X_{1, n}|Y_{1, n})\}_{n\in \N}$ is defined as
\begin{align}\label{eq:scgf}
\Lambda(\alpha)=\lim_{n\to\infty}\frac{1}{n}\log\E e^{\alpha\log G(X_{1, n}|Y_{1, n})}=\lim_{n\to\infty}\frac{1}{n}\log\E G(X_{1, n}|Y_{1, n})^\alpha,
\end{align}
provided the limit exists. The conditional $\alpha$-R\'enyi entropy of $\mathbb{X}$ given $\mathbb{Y}$ is defined as
\begin{align}\label{eq:cond-renyi-vector}
H_{\alpha}(\mathbb{X}|\mathbb{Y})=\lim_{n\to\infty}\frac{1}{n}H_\alpha(X_{1, n}|Y_{1, n}),
\end{align}
provided the limit exists, and the definition of $H_\alpha(X_{1, n}|Y_{1, n})$ is given in \eqref{eq:cond-renyi}. As $\alpha\to1$, we have the classical conditional Shannon entropy. By taking limits, we have
$$
H_0(\mathbb{X}|\mathbb{Y})=\log|\cX|,
$$
\begin{align}\label{eq:h-infty}
H_\infty(\mathbb{X}|\mathbb{Y})=-\lim_{n\to\infty}\frac{1}{n}\log\P(G(X_{1, n}|Y_{1, n})=1),
\end{align}
whenever the limit exists.

Corollary \ref{coro:scgf-renyi} and its counterpart Proposition 5 in \cite{Ari96} shows that there is a close connection between the scaled cumulant generating function of conditional guesswork and the conditional R\'enyi entropy of the corresponding random sequences. The following regularity assumption, which trivially holds for i.i.d. sequences, is analogous to Assumption 1 in \cite{CD13}. It will be the base of our study of large deviations for optimal guessing functions.

\begin{asump}\label{asump:asump}
Suppose the scaled cumulant generating function $\Lambda(\alpha)$ exists for $\alpha>-1$, and it has a continuous derivative. Furthermore, 
$$
\Lambda(\alpha)=\alpha H_{1/{(1+\alpha)}}(\mathbb{X}|\mathbb{Y}).
$$
\end{asump}

\begin{prop}\label{prop:lambda-1}
Under the above assumption, for all $\alpha\leq-1$, we have
$$
\Lambda(\alpha)=-H_\infty(\mathbb{X}|\mathbb{Y}).
$$
\end{prop}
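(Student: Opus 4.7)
The plan is to sandwich $\E G(X_{1,n}|Y_{1,n})^{\alpha}$ between two quantities that coincide with $\P(G(X_{1,n}|Y_{1,n})=1)$ up to a factor that is polynomial in $n$, so that after taking $\tfrac{1}{n}\log$ and sending $n\to\infty$ only the rate $-H_\infty(\mathbb{X}|\mathbb{Y})$ survives. Note that Assumption \ref{asump:asump} is used mainly to give meaning to $H_\infty(\mathbb{X}|\mathbb{Y})$ through \eqref{eq:h-infty}; the identity $\Lambda(\alpha)=\alpha H_{1/(1+\alpha)}(\mathbb{X}|\mathbb{Y})$ plays no direct role on the regime $\alpha\le-1$.

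First I would handle the lower bound, which is immediate: since $G(X_{1,n}|Y_{1,n})\ge1$ and $\alpha\le0$, keeping only the $k=1$ term in $\E G^\alpha=\sum_{k}k^\alpha\P(G=k)$ gives
\[
\E G(X_{1,n}|Y_{1,n})^{\alpha}\;\ge\;\P(G(X_{1,n}|Y_{1,n})=1).
\]
For the upper bound I would first reduce to the case $\alpha=-1$ via the pointwise inequality $G^{\alpha}\le G^{-1}$, valid because $\log G\ge0$ and $\alpha+1\le0$. I would then invoke the optimality of $G(\cdot|Y_{1,n})$ to argue that $k\mapsto\P(G(X_{1,n}|Y_{1,n})=k)$ is nonincreasing. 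Combined with the elementary harmonic-sum bound $\sum_{k=1}^{|\cX|^n}k^{-1}\le 1+n\log|\cX|$ already exploited in the proof of Theorem \ref{thm:upper}, this produces
\[
\E G(X_{1,n}|Y_{1,n})^{-1}\;=\;\sum_{k=1}^{|\cX|^n}\frac{\P(G(X_{1,n}|Y_{1,n})=k)}{k}\;\le\;(1+n\log|\cX|)\,\P(G(X_{1,n}|Y_{1,n})=1).
\]

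Concatenating the two bounds yields
\[
\P(G(X_{1,n}|Y_{1,n})=1)\;\le\;\E G(X_{1,n}|Y_{1,n})^{\alpha}\;\le\;(1+n\log|\cX|)\,\P(G(X_{1,n}|Y_{1,n})=1),
\]
and taking $\tfrac{1}{n}\log$ the correction $\tfrac{1}{n}\log(1+n\log|\cX|)$ vanishes, forcing the two outer sequences to have the same exponential rate. By \eqref{eq:h-infty} this common rate is $-H_\infty(\mathbb{X}|\mathbb{Y})$, which gives $\Lambda(\alpha)=-H_\infty(\mathbb{X}|\mathbb{Y})$ as claimed; the same sandwich also forces existence of the limit on whichever side is not postulated a priori.

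The only mildly subtle step is the monotonicity of $k\mapsto\P(G(X_{1,n}|Y_{1,n})=k)$, which fails for a generic guessing function. It rests on optimality of $G$: for each realisation $y_{1,n}$ of $Y_{1,n}$ an optimal rule guesses values in order of decreasing $p_{X|Y}(\cdot|y_{1,n})$, so $\P(G=k\mid Y_{1,n}=y_{1,n})$ is the $k$-th largest value of $p_{X|Y}(\cdot|y_{1,n})$ and hence nonincreasing in $k$; averaging over $y_{1,n}$ preserves this. Everything else in the argument is direct arithmetic.
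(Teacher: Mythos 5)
Your proposal is correct and follows essentially the same route as the paper's proof: sandwiching $\E G(X_{1,n}|Y_{1,n})^{\alpha}$ between $\P(G(X_{1,n}|Y_{1,n})=1)$ and $\P(G(X_{1,n}|Y_{1,n})=1)\sum_{i=1}^{|\cX|^n}i^{-1}$, with the harmonic sum contributing nothing at exponential scale. You additionally spell out the optimality-induced monotonicity of $k\mapsto\P(G=k)$ that the paper's upper bound uses implicitly, which is a welcome clarification rather than a deviation.
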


\begin{proof}
Notice that $\Lambda(\alpha)$ is the limit of a sequence of bounded convex functions. By Assumption \ref{asump:asump}, the limit $H_\infty(\mathbb{X}|\mathbb{Y})=\lim_{\alpha\downarrow-1}\Lambda(\alpha)$ exists. 
By definition, we have
$$
\E(G(X_{1, n}|Y_{1, n}))^\alpha=\sum_{i=1}^{|\cX|^n}i^\alpha\P(G(X_{1, n}|Y_{1, n})=i).
$$
For $\alpha\leq-1$, we have
$$
\E(G(X_{1, n}|Y_{1, n}))^\alpha\geq\P(G(X_{1, n}|Y_{1, n})=1),
$$
and
$$
\E(G(X_{1, n}|Y_{1, n}))^\alpha\leq \P(G(X_{1, n}|Y_{1, n})=1)\sum_{i=1}^{|\cX|^n}i^{-1}.
$$
Then the result follows from the existence of $H_\infty(\mathbb{X}|\mathbb{Y})$ and the simple fact that
$$
\lim_{n\to\infty}\frac{1}{n}\log\sum_{i=2}^{|\cX|^n}i^{-1}=0.
$$
\end{proof}

The Legendre transform of $\Lambda(\alpha)$ is defined as
\begin{align}\label{eq:rate-c-single}
\Lambda^*(x)=\sup_{\alpha\in\R}(x\alpha-\Lambda(\alpha)).
\end{align}
Define $\gamma=\lim_{\alpha\downarrow-1}\Lambda'(\alpha)$. One can check that
\begin{align}\label{eq:rate-gamma}
\Lambda^*(x)=H_\infty(\mathbb{X}|\mathbb{Y})-x, ~x\in[0, \gamma],
\end{align}
and
$$
\Lambda^*(x)=\infty,~x>\log|\mathcal{X}|.
$$
Recall that $x\in\R$ is called an exposed point of $\Lambda^*$ if for some $\alpha\in\R$ and all $x\neq y$,
$$
\alpha x-\Lambda^*(x)>\alpha y-\Lambda^*(y),
$$
and $\alpha$ is called an exposing hyperplane. 

\begin{thm}
Under Assumption \ref{asump:asump}, the sequence $\{n^{-1}\log G(X_{1, n}|Y_{1, n})\}_{n\in \N}$ satisfies the large deviation principle with the rate function $\Lambda^*(x)$, i.e.,
for any closed set $F\subset\R$,
\begin{align}\label{eq:upper}
\limsup_{n\to\infty}\frac{1}{n}\log\P(n^{-1}\log G(X_{1, n}|Y_{1, n})\in F)\leq-\inf_{x\in F}\Lambda^*(x),
\end{align}
and for any open set $J\subset\R$,
\begin{align}\label{eq:lower}
\liminf_{n\to\infty}\frac{1}{n}\log\P(n^{-1}\log G(X_{1, n}|Y_{1, n})\in J)\geq-\inf_{x\in J}\Lambda^*(x).
\end{align}
\end{thm}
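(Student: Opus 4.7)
The strategy is to apply the G\"artner--Ellis theorem (see, e.g., Theorem~2.3.6 of Dembo--Zeitouni, \emph{Large Deviations Techniques and Applications}). By Assumption~\ref{asump:asump} and Proposition~\ref{prop:lambda-1}, the scaled cumulant generating function $\Lambda(\alpha)$ exists and is finite for every $\alpha\in\R$, and is convex as a limit of convex functions; hence $\Lambda^{*}$ is a good rate function with compact effective domain in $[0,\log|\cX|]$, and the basic G\"artner--Ellis hypothesis is in place on all of $\R$.

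For the upper bound~\eqref{eq:upper}, I would run the standard exponential Markov argument: fix an interior point $m$ of the effective domain of $\Lambda^{*}$, split any closed $F\subset\R$ into $F\cap(-\infty,m]$ and $F\cap[m,\infty)$, and on each half--line bound $\P(n^{-1}\log G(X_{1,n}|Y_{1,n})\geq x)\leq e^{-n\alpha x}\,\E\, G(X_{1,n}|Y_{1,n})^{\alpha}$ with $\alpha>0$ (and the reversed inequality with $\alpha<0$ on the other tail). Taking $\limsup\frac{1}{n}\log$ and optimising over $\alpha$ produces $-\inf_{x\in F}\Lambda^{*}(x)$. Compactness of the effective domain of $\Lambda^{*}$ supplies the exponential tightness needed to extend this to arbitrary closed $F$.

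For the lower bound~\eqref{eq:lower}, the G\"artner--Ellis conclusion requires each infimiser of $\Lambda^{*}$ on an open set $J$ to be approximable by exposed points with exposing hyperplane in $\mathrm{int}\,\mathcal{D}_{\Lambda}=\R$. The continuous differentiability of $\Lambda$ on $(-1,\infty)$ in Assumption~\ref{asump:asump} makes every $x$ in the strictly convex range $(\gamma,\log|\cX|)$ an exposed point, with hyperplane $\alpha^{*}=(\Lambda')^{-1}(x)\in(-1,\infty)$; the tilted-measure construction then yields the lower bound on the part of $J$ lying in $(\gamma,\log|\cX|)$.

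The main obstacle is the flat piece $\Lambda^{*}(x)=H_{\infty}(\mathbb{X}|\mathbb{Y})-x$ on $[0,\gamma]$ from~\eqref{eq:rate-gamma}, where no point is strictly exposed and the textbook G\"artner--Ellis lower bound does not directly apply. I would treat this regime by hand. When $0\in J$, the definition~\eqref{eq:h-infty} already gives $\P(G(X_{1,n}|Y_{1,n})=1)=e^{-n(H_{\infty}(\mathbb{X}|\mathbb{Y})+o(1))}$ and the inclusion $\{G=1\}\subset\{n^{-1}\log G\in J\}$ matches $\Lambda^{*}(0)=H_{\infty}(\mathbb{X}|\mathbb{Y})$. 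For $x\in(0,\gamma]$, convex duality at $\alpha\downarrow-1$ forces the top $\approx e^{nx}$ values of the optimal guessing function to each carry probability at least $e^{-n(H_{\infty}(\mathbb{X}|\mathbb{Y})+o(1))}$; summing these $e^{nx}$ contributions yields $\P(n^{-1}\log G\in J)\geq e^{-n(H_{\infty}(\mathbb{X}|\mathbb{Y})-x+o(1))}$, as desired. Stitching this flat-piece bound to the exposed-point regime via continuity of $\Lambda^{*}$ on $[0,\log|\cX|]$ completes~\eqref{eq:lower}.
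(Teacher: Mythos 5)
Your overall architecture matches the paper's: G\"artner--Ellis gives the upper bound \eqref{eq:upper} and the lower bound \eqref{eq:lower} on the exposed-point region $(\gamma,\log|\cX|]$, and a bespoke argument is needed on the flat piece $[0,\gamma]$ where, by \eqref{eq:rate-gamma}, $\Lambda^*$ has no exposed points. The gap is in the one sentence that carries all the weight there: ``convex duality at $\alpha\downarrow-1$ forces the top $\approx e^{nx}$ values of the optimal guessing function to each carry probability at least $e^{-n(H_\infty(\mathbb{X}|\mathbb{Y})+o(1))}$.'' As stated this follows from nothing you have set up, and on its face it points the wrong way: optimality of the guessing order gives $\P(G(X_{1,n}|Y_{1,n})=i)\leq\P(G(X_{1,n}|Y_{1,n})=1)\approx e^{-nH_\infty(\mathbb{X}|\mathbb{Y})}$ for every $i$, i.e.\ an \emph{upper} bound of the claimed size, and nothing yet rules out that the probability has already decayed far below $e^{-nH_\infty(\mathbb{X}|\mathbb{Y})}$ by guess number $e^{nx}$ for some $x<\gamma$. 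This missing lower bound is precisely where the paper spends its effort: in its Case 1 it manufactures the bound by comparing the window around $x$ with a window around some $x^*>\gamma$ where the G\"artner--Ellis lower bound is already available, using monotonicity of the optimal order together with the cardinality estimate \eqref{eq:card}; and it needs a separate contradiction/counting argument (Case 2) when $\Lambda^*$ is infinite on all of $(\gamma,\log|\cX|]$, a configuration your sketch does not address at all.

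That said, your route can be completed, and would then be a genuinely different (and arguably more uniform) treatment of the flat piece. Fix $x_0<\gamma$ and $c>0$ and suppose $\P(G(X_{1,n}|Y_{1,n})=\lceil e^{nx_0}\rceil)\leq e^{-n(H_\infty(\mathbb{X}|\mathbb{Y})+c)}$ for large $n$. Splitting $\sum_i i^\alpha\,\P(G(X_{1,n}|Y_{1,n})=i)$ at $i=e^{nx_0}$ and using monotonicity together with $\P(G(X_{1,n}|Y_{1,n})=1)\leq e^{-n(H_\infty(\mathbb{X}|\mathbb{Y})-o(1))}$ yields, for $\alpha\in(-1,0)$,
\begin{align*}
\Lambda(\alpha)\leq -H_\infty(\mathbb{X}|\mathbb{Y})+\max\big\{x_0(1+\alpha),\ (1+\alpha)\log|\cX|-c\big\},
\end{align*}
which for $1+\alpha$ small enough contradicts the convexity bound $\Lambda(\alpha)\geq\Lambda(-1)+\gamma(1+\alpha)=-H_\infty(\mathbb{X}|\mathbb{Y})+\gamma(1+\alpha)$, since $x_0<\gamma$. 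This makes your asserted per-guess lower bound precise and covers both of the paper's cases at once. But as submitted, the proposal asserts the theorem's hardest step rather than proving it, so it is not yet a proof.
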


\begin{proof}
It suffices to consider $F, J\subset [0, \log|\mathcal{X}|]$, since the sequence $\{n^{-1}\log G(X_{1, n}|Y_{1, n})\}_{n\in \N}$ is supported on this range. The upper bound \eqref{eq:upper} readily follows from G\"{a}rtner-Ellis' Theorem (Theorem 2.3.6 in \cite{DZ10:book}), which assumes the existence of $\Lambda(\alpha)$ and that 0 is in the interior of $\{\alpha\in\R: \Lambda(\alpha)<\infty\}$. These assumptions are satisfied by Assumption \ref{asump:asump} and Proposition \ref{prop:lambda-1}. Regarding the lower bound \eqref{eq:lower}, we split $[0, \log|\mathcal{X}|]$ into $[0, \gamma]$ and $(\gamma, \log|\mathcal{X}|]$. For any open set $J\subset(\gamma, \log|\mathcal{X}|]$, G\"{a}rtner-Ellis' Theorem says that
$$
\liminf_{n\to\infty}\frac{1}{n}\log\P(n^{-1}\log G(X_{1, n}|Y_{1, n})\in J)\geq-\inf_{x\in J\cap \mathcal{F}}\Lambda^*(x),
$$
where $\mathcal{F}$ is the set of exposed points of $\Lambda^*$. Assumption \ref{asump:asump} implies that $\Lambda^*(x)$ has at most a finite number of points in $(\gamma, \log|\mathcal{X}|]$ without exposing hyperplanes. The continuity of $\Lambda^*$ implies that
$$
\inf_{x\in J\cap \mathcal{F}}\Lambda^*(x)=\inf_{x\in J}\Lambda^*(x).
$$
Therefore, the lower bound \eqref{eq:lower} holds when $J\subset(\gamma, \log|\cX|]$. Owing to the representation \eqref{eq:rate-gamma}, $\Lambda^*$ has no exposed points in $[0, \gamma]$. We need a different argument for the case $J\subset[0, \gamma]$. Without loss of generality, we can assume that $\gamma>0$. For any $x\in J\subset[0, \gamma]$ and $\epsilon>0$ small enough, we have $B(x, \epsilon):=(x-\epsilon, x+\epsilon)\subset J$. One can verify that
$$
\liminf_{n\to\infty}\frac{1}{n}\log\P(n^{-1}\log G(X_{1,n}|Y_{1, n})\in J)\geq \liminf_{n\to\infty}\frac{1}{n}\log\P(n^{-1}\log G(X_{1, n}|Y_{1, n})\in B(x, \epsilon)).
$$
Since $x\in J$ is arbitrary and $\epsilon$ can be arbitrarily small, using the representation \eqref{eq:rate-gamma}, the lower bound \eqref{eq:lower} will hold if we can show that
\begin{align}\label{eq:ldp-lower}
\lim_{\epsilon\to0}\liminf_{n\to\infty}\frac{1}{n}\log\P(n^{-1}\log G(X_{1, n}|Y_{1, n})\in B(x, \epsilon))\geq x-H_\infty(\mathbb{X}|\mathbb{Y}).
\end{align}
The proof proceeds in two cases. \\
\textbf{Case 1}. There is some $x^*>\gamma$ such that $\Lambda^*(x^*)$ is finite. We select $\epsilon$ such that $x^*-\epsilon>\gamma$. Since lower bound \eqref{eq:lower} holds for $B(x^*, \epsilon)$, we have
\begin{align}\label{eq:lower-x^*}
\liminf_{n\to\infty}\frac{1}{n}\log\P(n^{-1}\log G(X_{1, n}|Y_{1, n})\in B(x^*, \epsilon))\geq -\inf_{y\in B(x^*, \epsilon)}\Lambda^*(y).
\end{align}
We define the set
\begin{align}\label{eq:set}
\cX^n(y_{1, n}, x^*, \epsilon)=\{x_{1, n}\in\cX^n: n^{-1}\log G(x_{1, n}|y_{1, n})\in B(x^*, \epsilon)\}.
\end{align}
One can verify that
\begin{align}\label{eq:card}
\lim_{\epsilon\to0}\lim_{n\to\infty}\frac{1}{n}\log |\cX^n(y_{1, n}, x^*, \epsilon)|=x^*.
\end{align}
Notice that
\begin{align*}
&~~~~~\P(n^{-1}\log G(X_{1, n}|Y_{1, n})\in B(x^*, \epsilon))\\
&\leq \sum_{y_{1, n}\in\cY^n}|\cX^n(y_{1, n}, x^*, \epsilon)|p_{Y_{1, n}}(y_{1, n})\sup_{x_{1, n}\in \cX^n(y_{1, n}, x^*, \epsilon)}p_{X_{1, n}|Y_{1, n}}(x_{1, n}|y_{1, n})\\
&\leq \sum_{y_{1, n}\in\cY^n}|\cX^n(y_{1, n}, x^*, \epsilon)|p_{Y_{1, n}}(y_{1, n})\inf_{x_{1, n}\in \cX^n(y_{1, n}, x, \epsilon)}p_{X_{1, n}|Y_{1, n}}(x_{1, n}|y_{1, n}).
\end{align*}
In the second inequality, we use the monotonicity of conditional guesswork and the fact that $x+\epsilon<x^*-\epsilon$.  Notice that the cardinality $|\cX^n(y_{1, n}, x^*, \epsilon)|$ is independent of the choice of $y_{1, n}$. Combine the above upper bound with \eqref{eq:lower-x^*} and \eqref{eq:card}, we have
$$
-\Lambda^*(x^*)\leq x^*+\lim_{\epsilon\to0}\liminf_{n\to\infty}\frac{1}{n}\log\sum_{y_{1, n}\in\cY^n}p_{Y_{1, n}}(y_{1, n})\inf_{x_{1, n}\in \cX^n(y_{1, n}, x, \epsilon)}p_{X_{1, n}|Y_{1, n}}(x_{1, n}|y_{1, n}).
$$
Let $x^*\to\gamma$ in the above inequality. Using the representation \eqref{eq:rate-gamma}, we have
\begin{align}\label{eq:a}
\lim_{\epsilon\to0}\liminf_{n\to\infty}\frac{1}{n}\log\sum_{y_{1, n}\in\cY^n}p_{Y_{1, n}}(y_{1, n})\inf_{x_{1, n}\in \cX^n(y_{1, n}, x, \epsilon)}p_{X_{1, n}|Y_{1, n}}(x_{1, n}|y_{1, n})\geq -H_\infty(\mathbb{X}|\mathbb{Y}).
\end{align}
Notice that
\begin{align*}
&~~~~\P(n^{-1}\log G(X_{1, n}|Y_{1, n})\in B(x, \epsilon))\\
&\geq \sum_{y_{1, n}\in\cY^n}|\cX^n(y_{1, n}, x, \epsilon)|p_{Y_{1, n}}(y_{1, n})\inf_{x_{1, n}\in \cX^n(y_{1, n}, x, \epsilon)}p_{X_{1, n}|Y_{1, n}}(x_{1, n}|y_{1, n}),
\end{align*}
where $\cX^n(y_{1, n}, x, \epsilon)$ is defined as in \eqref{eq:set} with $x^*$ replaced by $x$. The lower bound \eqref{eq:ldp-lower} follows from \eqref{eq:a} and the facts that $|\cX^n(y_{1, n}, x, \epsilon)|$ is independent of $y_{1, n}$ and that as in \eqref{eq:card}
$$
\lim_{\epsilon\to0}\lim_{n\to\infty}\frac{1}{n}\log |\cX^n(y_{1, n}, x, \epsilon)|=x.
$$
This concludes the first case that there is some $x^*>\gamma$ such that $\Lambda^*(x^*)$ is finite. \\
\textbf{Case 2}. We have that $\Lambda^*(x)=\infty$ for all $x>\gamma$, which implies that $\Lambda(\alpha)$ is a linear function with slope $\gamma$ for $\alpha>-1$. To see this, using the definition of $\Lambda^*(x)$ in \eqref{eq:rate-c-single}, we can see that $x\alpha-\Lambda(\alpha)$ is monotonically increasing for $\alpha>-1$. Using the differentiability of $\Lambda(\alpha)$ in Assumption \ref{asump:asump}, we have $\Lambda'(\alpha)<x$ for any $x>\gamma$. Recall that $\gamma=\lim_{\alpha\downarrow-1}\Lambda'(\alpha)$. We have $\Lambda'(\alpha)=\gamma$ for $\alpha>-1$. The fact $\Lambda(0)=0$ and the representation \eqref{eq:rate-gamma} imply that $\gamma=H_\infty(\mathbb{X}|\mathbb{Y})$ and $\Lambda^*(\gamma)=0$. Since $\Lambda'(0)=H(\mathbb{X}|\mathbb{Y})$ is the only zero of $\Lambda^*(x)$, we must have $\gamma=H_\infty(\mathbb{X}|\mathbb{Y})=H(\mathbb{X}|\mathbb{Y})=H_0(\mathbb{X}|\mathbb{Y})=\log|\mathcal{X}|$. 
The proof in the second case proceeds by contradiction. Assume that the lower bound \eqref{eq:ldp-lower} does not hold. Then, there is some $x\in J$ such that
\begin{align*}
\lim_{\epsilon\to0}\liminf_{n\to\infty}\frac{1}{n}\log\P(n^{-1}\log G(X_{1, n}|Y_{1, n})\in B(x, \epsilon))< x-H_\infty(\mathbb{X}|\mathbb{Y}).
\end{align*}
For any fixed $\delta>0$ and $\epsilon>0$ small enough, we have
\begin{align*}
\liminf_{n\to\infty}\frac{1}{n}\log\P(n^{-1}\log G(X_{1, n}|Y_{1, n})\in B(x, \epsilon))< x-H_\infty(\mathbb{X}|\mathbb{Y})-\delta.
\end{align*}
For $n$ large enough, we have
\begin{align}\label{eq:1'}
\P(n^{-1}\log G(X_{1, n}|Y_{1, n})\in B(x, \epsilon))<e^{n(x-H_\infty(\mathbb{X}|\mathbb{Y})-\delta)}.
\end{align}
Since $(e^{n(x-\epsilon)}, e^{n\gamma}]$ can be covered by at most $\frac{e^{n\gamma}-e^{n(x-\epsilon)}}{e^{n(x+\epsilon)}-e^{n(x-\epsilon)}}$ translations of $(e^{n(x-\epsilon)}, e^{n(x+\epsilon)})$, the monotonicity of conditional guesswork and \eqref{eq:1'} imply that for $n$ large enough
\begin{align}\label{eq:1}
\P(n^{-1}\log G(X_{1, n}|Y_{1, n})\in (x-\epsilon, \gamma])&\leq \frac{e^{n\gamma}-e^{n(x-\epsilon)}}{e^{n(x+\epsilon)}-e^{n(x-\epsilon)}}\cdot e^{n(x-H_\infty(\mathbb{X}|\mathbb{Y})-\delta)}\leq\frac{e^{n(-\delta+\epsilon)}}{e^{2n\epsilon}-1},
\end{align}
which approaches 0 as $n\to\infty$. Using the definition of $H_\infty(\mathbb{X}|\mathbb{Y})$ given in \eqref{eq:h-infty}, we have that for any fixed $\delta'>0$ and $n$ large enough
$$
\P(G(X_{1, n}|Y_{1, n})=1)<e^{n(\delta'-H_\infty(\mathbb{X}|\mathbb{Y}))}.
$$ 
The monotonicity of conditional guesswork and the above upper bound imply that
\begin{align}\label{eq:2}
\P(n^{-1}\log G(X_{1, n}|Y_{1, n})\in [1, x-\epsilon])\leq e^{n(x-\epsilon)} e^{n(\delta'-H_\infty(\mathbb{X}|\mathbb{Y}))}.
\end{align}
Since $x<\gamma=H_\infty(\mathbb{X}|\mathbb{Y})$, we can select $\delta'>0$ small enough such that the above probability approaches 0 as $n\to\infty$. 
The upper bound \eqref{eq:1} together with the upper bound \eqref{eq:2} contradicts the fact that
$$
\P(n^{-1}\log G(X_{1, n}|Y_{1, n})\in [1, \gamma])=1.
$$
Hence, the lower bound \eqref{eq:ldp-lower} must hold.
\end{proof}

\begin{rmk}
Under Assumption \ref{asump:asump}, we have
\begin{align}\label{eq:log-expec-single}
\lim_{n\to\infty}\frac{1}{n}\log\E G(X_{1, n}|Y_{1, n})=\Lambda(1)=H_{1/2}(\mathbb{X}|\mathbb{Y}),
\end{align}
whereas 
\begin{align}\label{eq:expec-log-single}
\lim_{n\to\infty}\frac{1}{n}\E\log G(X_{1, n}|Y_{1, n})=\Lambda'(0)=H(\mathbb{X}|\mathbb{Y}),
\end{align}
which is the zero of the rate function $\Lambda^*(x)$.
\end{rmk}


\section {Discussion of parallel guesswork}\label{sec:parallel}

The multi-user guesswork problem was studied by Christiansen-Duffy-du Pin Calmon-M\'edard \cite{CDCM15}. Suppose $m$ users independently select strings from a finite, but potentially large, list. An inquisitor who knows the selection probabilities of each user is equipped with a method that enables the testing of each (user, string) pair, one at a time, for whether that string had been selected by that user. The inquisitor wishes to identify any $k\leq m$ of the strings with the smallest number of total guesses. Therefore, a multi-user guesswork strategy is a querying order of (user, string) pairs. Unlike the single-user guesswork, there is no stochastically dominant strategy in the multi-user case if $k<m$ (Lemma 1, \cite{CDCM15}). The following round-robin strategy, constructed in  \cite{CDCM15}, satisfies the large deviation principle and asymptotically meets the bound of the multi-user guesswork. For the round-robin strategy, each guess allows up to $m$ parallel queries, that is, to query the most likely string of one user followed by the most likely string of a second user and so forth, for each user in a round-robin fashion, before moving to the second most likely string of each user. This is equivalent to that $m$ guessers work independently on $m$ parallel strings. We call such a strategy parallel guesswork and extend the large deviation results for single-user conditional guesswork to parallel conditional guesswork.

Define $[m]=\{1, \cdots, m\}$. For $i\in[m]$, let  
$X_{1, n}^{i}=(X_1^{i}, \cdots, X_n^{i})$ and $Y_{1, n}^{i}=(Y_1^{i}, \cdots, Y_n^{i})$ be $m$ pairs of random sequences of length $n$. Guesses for the $m$ sequences $X_{1, n}^{i}$ are made simultaneously, and we assume the outcome for $X_{1, n}^{i}$ only depends on $Y_{1, n}^{i}$. Let $G(X_{1, n}^{i}| Y_{1, n}^{i})$ be an optimal single-user conditional guessing strategy for $(X_{1, n}^{i}, Y_{1, n}^{i})$. For any $1\leq k\leq m$, we define 
\begin{align}\label{eq:k-min}
G_{k, m}(\{(X_{1, n}^{i}, Y_{1, n}^{i})\}_{i\in[m]})=k\text{-}\min(G(X_{1, n}^{1}| Y_{1, n}^{1}), \cdots, G(X_{1, n}^{m}| Y_{1, n}^{m})),
\end{align}
where $k\text{-}\min(v)$ gives the $k$-th smallest component of the vector $v$. The unconditional analogue of \eqref{eq:k-min} is studied in \cite{CDCM15}. We know that $\{n^{-1}\log G(X_{1, n}^{i}| Y_{1, n}^{i})\}_{n\in \N}$ satisfies the large deviation principle under the regularity Assumption \ref{asump:asump}. Suppose the $m$ pairs $(X_{1, n}^{i}, Y_{1, n}^{i})$ are independent. Then we can apply the contraction principle (Theorem 4.2.1 in \cite{DZ10:book}) to show that $\{n^{-1}\log G_{k, m}(\{(X_{1, n}^{i}, Y_{1, n}^{i})\}_{i\in[m]})\}_{n\in \N}$ also satisfies the large deviation principle. 

Let $\Lambda_i(\alpha)$ be the scaled cumulant generating function of $\{n^{-1}\log G(X_{1, n}^{i}| Y_{1, n}^{i})\}_{n\in \N}$ (see definition \eqref{eq:scgf}). We denote by $\Lambda_i^*(x)$ the Legendre transform of $\Lambda_i(\alpha)$. Let $H_\alpha(\mathbb{X}^{i}|\mathbb{Y}^{i})$ be the conditional $\alpha$-R\'enyi entropy  of $\mathbb{X}^{i}=(X_1^{i}, X_2^{i}, \cdots)$ given by $\mathbb{Y}^{i}=(Y_1^{i}, Y_2^{i}, \cdots)$ (definition \eqref{eq:cond-renyi-vector}).

\begin{thm}\label{thm:parallel}
Suppose the $m$ pairs $(X_{1, n}^i, Y_{1, n}^i)$ are jointly independent. Suppose that $\Lambda_i(\alpha)$ satisfies Assumption \ref{asump:asump}. Then $\{n^{-1}\log G_{k, m}(\{(X_{1, n}^{i}, Y_{1, n}^{i})\}_{i\in[m]})\}_{n\in \N}$ satisfies the large deviation principle with the rate function
\begin{align}\label{eq:rate-parallel}
I_{k, m} (x)=\max_{i_1, \cdots, i_m}\Big\{\Lambda_{i_1}^*(x)+\sum_{l=2}^k\delta_{i_l}(x)+\sum_{l=k+1}^m\gamma_{i_l}(x)\Big\},
\end{align}
where
$$
\delta_{i}(x)=
\begin{cases}
\Lambda_i^*(x) & \text{if}~x\leq H(\mathbb{X}^{i}|\mathbb{Y}^{i}), \\
0 & \text{otherwise},
\end{cases}
$$
and
$$
\gamma_{i}(x)=
\begin{cases}
\Lambda_i^*(x) & \text{if}~x\geq H(\mathbb{X}^{i}|\mathbb{Y}^{i}), \\
0 & \text{otherwise}.
\end{cases}
$$
The scaled cumulant generating function of $\{n^{-1}\log G_{k, m}(\{(X_{1, n}^{i}, Y_{1, n}^{i})\}_{i\in[m]})\}_{n\in \N}$ is
\begin{align}\label{eq:scgf-parallel}
\Lambda_{k, m}(\alpha)&=\lim_{n\to\infty}\frac{1}{n}\log\E e^{\alpha\log G_{k, m}(\{(X_{1, n}^{i}, Y_{1, n}^{i})\}_{i\in[m]})}\nonumber\\
&=\sup_{x\in[0, \log|\cX|]}(\alpha x-I_{k, m}(x)).
\end{align}

\end{thm}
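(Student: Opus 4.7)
The plan is to apply the contraction principle (Theorem 4.2.1 in \cite{DZ10:book}) to transfer the LDP established in the previous theorem from the joint vector of normalized single-user log-guesses to its $k$-th order statistic. For $i\in[m]$, set $Z_n^{(i)} := n^{-1}\log G(X_{1,n}^i|Y_{1,n}^i)$. Under Assumption \ref{asump:asump}, the preceding theorem gives the LDP for each $\{Z_n^{(i)}\}_n$ on $[0,\log|\cX|]$ with rate function $\Lambda_i^*$. Since $\log$ is monotonic and commutes with $k\text{-}\min$,
$$n^{-1}\log G_{k,m}\bigl(\{(X_{1,n}^i, Y_{1,n}^i)\}_{i\in[m]}\bigr) = k\text{-}\min\bigl(Z_n^{(1)}, \ldots, Z_n^{(m)}\bigr),$$
so it suffices to establish the LDP for this right-hand side.

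By the hypothesis of joint independence of the $m$ pairs, the variables $Z_n^{(1)}, \ldots, Z_n^{(m)}$ are mutually independent. Their joint SCGF factorizes as $\sum_{i} \Lambda_i(\alpha_i)$, which, together with the essential smoothness inherited from Assumption \ref{asump:asump}, puts us in a position to invoke the multivariate G\"artner-Ellis theorem. This yields an LDP for $(Z_n^{(1)}, \ldots, Z_n^{(m)})$ on the compact box $[0,\log|\cX|]^m$ with rate function $J(x_1, \ldots, x_m) = \sum_{i=1}^m \Lambda_i^*(x_i)$. The map $f(x_1, \ldots, x_m) = k\text{-}\min(x_1, \ldots, x_m)$ is continuous on $\R^m$ (as a coordinate projection of the continuous sort map), so the contraction principle immediately delivers an LDP for $\{n^{-1}\log G_{k,m}\}_n$ on $[0, \log|\cX|]$ with rate function
$$I_{k,m}(x) = \inf\Big\{\sum_{i=1}^m \Lambda_i^*(x_i) : k\text{-}\min(x_1, \ldots, x_m) = x\Big\}.$$

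The main computational task is to identify this constrained infimum with the expression in \eqref{eq:rate-parallel}. Any feasible tuple admits a labeling $(i_1, \ldots, i_m)$ of $[m]$ with $x_{i_1} = x$, $x_{i_l} \leq x$ for $2 \leq l \leq k$, and $x_{i_l} \geq x$ for $k+1 \leq l \leq m$. Each $\Lambda_i^*$ is convex and attains its unique zero at $H(\mathbb{X}^i|\mathbb{Y}^i)$, so its minimum over $(-\infty, x]$ is precisely $\delta_i(x)$ and its minimum over $[x, \infty)$ is precisely $\gamma_i(x)$. Minimizing coordinate by coordinate for each fixed labeling and then optimizing over labelings recovers the form displayed in \eqref{eq:rate-parallel}.

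Finally, formula \eqref{eq:scgf-parallel} follows from Varadhan's lemma (Theorem 4.3.1 in \cite{DZ10:book}): the sequence $\{n^{-1}\log G_{k,m}\}_n$ is supported in the compact interval $[0, \log|\cX|]$, and $x \mapsto \alpha x$ is continuous and bounded there, so exponential tightness and the required moment bound are automatic. I expect the principal obstacle to be the combinatorial bookkeeping in the reduction of the contracted infimum to \eqref{eq:rate-parallel}, in particular a careful case analysis of whether $x$ lies to the left or the right of each $H(\mathbb{X}^i|\mathbb{Y}^i)$; the independence-LDP and continuity of the $k$-th order statistic are standard, so no further analytic input is needed.
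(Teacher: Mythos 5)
Your overall strategy --- reduce to the $k$-th order statistic of the independent single-user quantities, establish a joint LDP, contract along the continuous map $k\text{-}\min$, and finish with Varadhan's lemma --- is exactly the route the paper alludes to at the start of Section \ref{sec:parallel} (the paper itself omits the proof, deferring to Theorem 5 of \cite{CDCM15}). Two steps in your execution are problematic, however. First, the joint LDP cannot be obtained from the multivariate G\"artner--Ellis theorem with essential smoothness ``inherited from Assumption \ref{asump:asump}'': essential smoothness fails, since by Proposition \ref{prop:lambda-1} each $\Lambda_i$ is constant for $\alpha\le-1$, so $\Lambda_i'$ jumps at $\alpha=-1$ and, by \eqref{eq:rate-gamma}, $\Lambda_i^*$ is affine on $[0,\lim_{\alpha\downarrow-1}\Lambda_i'(\alpha)]$ and has no exposed points there. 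This is precisely the defect that forced the bespoke two-case lower-bound argument in the single-user theorem; G\"artner--Ellis would reproduce the same defective lower bound in $m$ dimensions. The correct assembly is to use the already-proved full single-user LDPs (not their SCGFs) together with independence: independent sequences each satisfying an LDP with a good rate function on a compact set jointly satisfy the LDP with rate $\sum_i\Lambda_i^*(x_i)$, exponential tightness being automatic from the compact supports. With that substitution the contraction and Varadhan steps go through.

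Second, the identification of the contracted rate function does not ``recover the form displayed'' as written. The constraint set $\{(x_1,\dots,x_m):k\text{-}\min(x_1,\dots,x_m)=x\}$ is a finite union, over labelings $(i_1,\dots,i_m)$, of product sets of the form $\{x_{i_1}=x\}\cap\bigcap_{l=2}^k\{x_{i_l}\le x\}\cap\bigcap_{l=k+1}^m\{x_{i_l}\ge x\}$; the infimum over a union is the \emph{minimum} of the infima, and coordinatewise minimization inside each product set gives $\Lambda_{i_1}^*(x)+\sum_{l=2}^k\delta_{i_l}(x)+\sum_{l=k+1}^m\gamma_{i_l}(x)$. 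So the contraction principle yields the minimum of these sums over labelings, not the maximum written in \eqref{eq:rate-parallel}. (Take $m=2$, $k=1$, with $H(\mathbb{X}^1|\mathbb{Y}^1)<x<H(\mathbb{X}^2|\mathbb{Y}^2)$: the event $\{Z_n^{(1)}\approx x,\ Z_n^{(2)}\ge x\}$ already forces $\min(Z_n^{(1)},Z_n^{(2)})\approx x$ at cost $\Lambda_1^*(x)$, which is the min of the two labeled costs; the max would understate the probability.) Your phrase ``optimizing over labelings'' silently converts this min into a max. You need to either carry the min through and reconcile it with the stated formula (they coincide in the identically distributed setting of the corollary, but not in general), or justify the max directly, which the contraction argument does not do.
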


We omit the proof of this statement since it can be proved in the same manner as Theorem 5 in \cite{CDCM15} with a slight change of notations. As observed in \cite{CDCM15}, the rate function is not necessary convex. Convexity of the rate function is
ensured if all users select strings using the same stochastic property, whereupon the result in Theorem \ref{thm:parallel} simplifies greatly.

\begin{coro}
Under assumptions in Theorem \ref{thm:parallel}, we also assume the $m$ pairs $(X_{1, n}^i, Y_{1, n}^i)$ have identical distribution. Let $\Lambda(\alpha)$ be the common scaled cumulant generating function with the Legendre transform $\Lambda^*(x)$. Define $H_\alpha(\mathbb{X}|\mathbb{Y})=H_\alpha(\mathbb{X}^{i}|\mathbb{Y}^{i})$ to be the common conditional $\alpha$-R\'enyi entropy. Then the rate function in \eqref{eq:rate-parallel} simplifies to
\begin{align}\label{eq:rate-parallel-iid}
I(k, m, x)=
\begin{cases}
k\Lambda^*(x), & x\in[0, H(\mathbb{X}|\mathbb{Y})],\\
(m-k+1)\Lambda^*(x), & x\in(H(\mathbb{X}|\mathbb{Y}), \log|\cX|].
\end{cases}
\end{align}
The scaled cumulant generating function in \eqref{eq:scgf-parallel} is
\begin{align}\label{eq:scgf-parallel-iid}
\Lambda_{k, m}(\alpha)=
\begin{cases}
k\Lambda\left(\frac{\alpha}{k}\right), & \alpha\leq0,\\
(m-k+1)\Lambda(\frac{\alpha}{m-k+1}), & \alpha>0.
\end{cases}
\end{align}
\end{coro}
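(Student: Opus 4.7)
The plan is to read off the corollary directly from Theorem \ref{thm:parallel} by exploiting the symmetry introduced by the identical distribution assumption, and then to compute a Legendre transform to get the scaled cumulant generating function. Both steps are essentially bookkeeping once one keeps track of the right regions of $x$.

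For the rate function, under the i.i.d.\ assumption we have $\Lambda_i^* = \Lambda^*$ and $H(\mathbb{X}^i|\mathbb{Y}^i) = H(\mathbb{X}|\mathbb{Y})$ for every $i$, so the functions $\delta_i$ and $\gamma_i$ all coincide with a common $\delta$ and $\gamma$. The maximum over $(i_1,\dots,i_m)$ in \eqref{eq:rate-parallel} therefore becomes vacuous, and the displayed expression collapses to
\[
I_{k,m}(x) \;=\; \Lambda^*(x) + (k-1)\,\delta(x) + (m-k)\,\gamma(x).
\]
A case split on whether $x\le H(\mathbb{X}|\mathbb{Y})$ (so $\delta(x)=\Lambda^*(x)$, $\gamma(x)=0$) or $x>H(\mathbb{X}|\mathbb{Y})$ (so $\delta(x)=0$, $\gamma(x)=\Lambda^*(x)$) immediately recovers the piecewise form \eqref{eq:rate-parallel-iid}.

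For the scaled cumulant generating function, I would plug \eqref{eq:rate-parallel-iid} into the variational formula \eqref{eq:scgf-parallel} and split the supremum along the breakpoint $H(\mathbb{X}|\mathbb{Y})$. On each piece the problem reduces to
\[
\sup_{x}\bigl(\alpha x - c\,\Lambda^*(x)\bigr) \;=\; c\sup_{x}\bigl((\alpha/c)\,x - \Lambda^*(x)\bigr) \;=\; c\,\Lambda(\alpha/c),
\]
by Fenchel--Moreau duality; this is legitimate because $\Lambda$ is convex and lower semicontinuous, being the SCGF of a sequence supported on the compact interval $[0,\log|\cX|]$, and hence is the Legendre transform of $\Lambda^*$. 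Applying this with $c=k$ on $[0,H(\mathbb{X}|\mathbb{Y})]$ and with $c=m-k+1$ on $(H(\mathbb{X}|\mathbb{Y}),\log|\cX|]$ produces the two candidate values appearing in \eqref{eq:scgf-parallel-iid}.

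The main obstacle I anticipate is selecting the dominant branch for each sign of $\alpha$. The clean way to handle this is to recall that $\Lambda^*$ is non-negative, convex, and vanishes exactly at $x = H(\mathbb{X}|\mathbb{Y}) = \Lambda'(0)$; so for $\alpha\le 0$ the unrestricted maximizer of $(\alpha/c)x - \Lambda^*(x)$ lies in $[0,H(\mathbb{X}|\mathbb{Y})]$, whereas for $\alpha\ge 0$ it lies in $[H(\mathbb{X}|\mathbb{Y}),\log|\cX|]$. On the \emph{opposite} branch, monotonicity of $\alpha x - c\Lambda^*(x)$ forces the restricted supremum to be attained at the endpoint $x = H(\mathbb{X}|\mathbb{Y})$, with value $\alpha H(\mathbb{X}|\mathbb{Y})$, which is dominated by $c\Lambda(\alpha/c)$ (take $x=H(\mathbb{X}|\mathbb{Y})$ in the defining supremum of $c\Lambda(\alpha/c)$). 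Combining these observations picks out $k\Lambda(\alpha/k)$ for $\alpha\le 0$ and $(m-k+1)\Lambda(\alpha/(m-k+1))$ for $\alpha>0$, completing \eqref{eq:scgf-parallel-iid}.
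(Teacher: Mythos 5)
Your proposal is correct. The paper itself gives no proof of this corollary (it defers to the analogous computation in \cite{CDCM15}), so there is nothing to diverge from: your argument is the natural and intended one, and it supplies the details the paper omits. Both steps check out: with identical distributions every choice of $(i_1,\dots,i_m)$ in \eqref{eq:rate-parallel} yields the same value $\Lambda^*(x)+(k-1)\delta(x)+(m-k)\gamma(x)$, which gives \eqref{eq:rate-parallel-iid}; and your branch-selection argument for the supremum is sound, since $\Lambda$ is finite and convex on $\R$ (hence continuous and equal to $\Lambda^{**}$), $\Lambda^*$ is convex with its unique zero and minimum at $H(\mathbb{X}|\mathbb{Y})=\Lambda'(0)$, so for $\alpha\le 0$ the unrestricted maximizer of $(\alpha/k)x-\Lambda^*(x)$ falls in $[0,H(\mathbb{X}|\mathbb{Y})]$ while the other branch is capped at $\alpha H(\mathbb{X}|\mathbb{Y})\le k\Lambda(\alpha/k)$, and symmetrically for $\alpha>0$.
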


\begin{rmk}
Since $H(\mathbb{X}|\mathbb{Y})$ is zero of $\Lambda^*(x)$, it is also the zero of $\Lambda^*_{k, m}(x)$. Similar to \eqref{eq:log-expec-single}, we have 
$$
\lim_{n\to\infty}\frac{1}{n}\log\E G_{k, m}(\{(X_{1, n}^{i}, Y_{1, n}^{i})\}_{i\in[m]})=\Lambda_{k, m}(1)=H_{\frac{m-k+1}{m-k+2}}(\mathbb{X}|\mathbb{Y}),
$$
where the second identity follows from \eqref{eq:scgf-parallel-iid} and Assumption \ref{asump:asump}. Analogous to \eqref{eq:expec-log-single}, we have
$$
\lim_{n\to\infty}\frac{1}{n}\E\log G_{k, m}(\{(X_{1, n}^{i}, Y_{1, n}^{i})\}_{i\in[m]})=\Lambda_{k, m}'(0)=H(\mathbb{X}|\mathbb{Y}).
$$
\end{rmk}

\section*{Acknowledgment}
We thank Muriel M\'edard and Ken R. Duffy for valuable comments.  We also thank Ahmad Beirami for pointing out reference \cite{Cam65}.


\begin{thebibliography}{10}

\bibitem{Ari96}
E.~Ar{\i}kan, ``An inequality on guessing and its application to sequential decoding," \emph{IEEE Trans. Inform. Theory}, vol. 42, no. 1, pp. 99--105, January 1996.

\bibitem{Ari77}
S.~Arimoto, ``Information measures and capacity of order {$\alpha $} for discrete
  memoryless channels," pp. 41--52. \emph{Colloq. Math. Soc. J{\'a}nos Bolyai}, vol. 16, 1977.

\bibitem{Cam65}

L. L. Campbell, ``A coding theorem and Rényi's entropy", \emph {Informat. Contr.}, vol. 8, pp. 423-429, 1965.

\bibitem{CD13}
M.~M. Christiansen and K.~R. Duffy, ``Guesswork, large deviations, and {S}hannon entropy," \emph{IEEE Trans. Inform. Theory}, vol. 59, no. 2, pp. 796--802, February 2013.

\bibitem{CDCM15}
M.~M. Christiansen, K.~R. Duffy, F.~du~Pin~Calmon, and M~M{\'e}dard, ``Multi-user guesswork and brute force security,"
\emph{IEEE Trans. Inform. Theory}, vol. 61, no. 12, pp. 6876--6886, December 2015.

\bibitem{DZ10:book}
A.~Dembo and O.~Zeitouni, ``Large deviations techniques and applications," vol.~38, \emph{Stochastic Modelling and Applied Probability}, Springer-Verlag, Berlin, 2010.

\bibitem{DLM18}

K.~R. Duffy, J. Li, and M~M{\'e}dard, ``Capacity-achieving guessing random additive noise decoding (GRAND)," \emph{IEEE Trans. Inform. Theory}, to appear

\bibitem{FB14}
S.~Fehr and S.~Berens,  ``On the conditional {R}{\'e}nyi entropy," \emph{IEEE Trans. Inform. Theory}, vol. 60, no. 11, pp. 6801--6810, November 2014.


\bibitem{JB67}
I.~Jacobs and E.~Berlekamp, ``A lower bound to the distribution of computation for sequential
  decoding," \emph{IEEE Trans. Inform. Theorys}, vol. 13, no. 2, pp. 167--174, April 1967.

\bibitem{MS04}
D.~Malone and W.~G. Sullivan, ``Guesswork and entropy," \emph{IEEE Trans. Inform. Theory}, vol. 50, no. 3, pp. 525--526, March 2004.

\bibitem{Mas94}
J. L. Massey, ``Guessing and entropy," \emph{Proc. 1994 IEEE Int. Symp. on Information Theory}, page 204. Trondheim, Norway, 1994.

\bibitem{PS04}
C.~E. Pfister and W.~G. Sullivan, ``R{\'e}nyi entropy, guesswork moments, and large deviations," \emph{IEEE Trans. Inform. Theory}, vol. 50, no. 11, pp. 2794--2800, November 2004.

\bibitem{Sha48}
C.~E. Shannon, ``A mathematical theory of communication,'' \emph{Bell System Tech.
  J.}, vol.~27, pp. 379--423, 623--656, 1948.

\end{thebibliography}

\end{document}